\newtheorem{theorem}{Theorem}[section]
\newtheorem{definition}[theorem]{Definition}
\newtheorem{proposition}[theorem]{Proposition}
\title{\bf The Sum and Product of Independence Numbers of Graphs and their Line Graphs}
\author{{\bf Susanth C \footnote{Department of Mathematics, Research \& Development Centre, Bharathiar University, Coimbatore - 641046, Tamilnadu, email: {\em susanth\_c@yahoo.com}}}   ~and
{\bf Sunny Joseph Kalayathankal\footnote{Department of Mathematics, Kuriakose Elias College, Mannanam, Kottayam - 686561, Kerala, email:{\em sunnyjoseph2000@yahoo.com}}}}
\date{}
\begin{document}
\maketitle
\begin{abstract}
The bounds on the sum and product of chromatic numbers of a graph and its complement are known as Nordhaus-Gaddum inequalities. 
In this paper, we study the bounds on the sum and product of the independence numbers of graphs and their line graphs. 
We also provide a new characterization of the certain graph classes.
\end{abstract}
{\bf Keywords:} Independence number, matching number, line graph.
\\
\noindent \textbf{Mathematics Subject Classification 2010: 05C69, 05C70}
\section{Introduction}
For all  terms and definitions, not defined specifically in this paper, we refer to \cite{FH}. Unless mentioned otherwise, all graphs considered here are simple, finite and have no isolated vertices.
\\Many problems in extremal graph theory seek the extreme values of graph parameters
on families of graphs. The classic paper of Nordhaus and Gaddum \cite{KCA} study the extreme values of the sum (or product) of a parameter on a graph and its complement, following  solving these problems for the chromatic number on n-vertex graphs. In this paper, we study such problems for some graphs and their associated graphs.
\begin{definition}\rm{
\cite{CGT} Two vertices that are not adjacent in a graph $G$
are said to be \textit{independent}. A set $S$ of vertices is independent if any two vertices of $S$ are independent. The \textit{vertex independence number} or simply the \textit{independence number}, of a graph $G$, denoted by  $\alpha(G)$ is the maximum cardinality among
the independent sets of vertices of $G$.}
\end{definition} 
\begin{definition}{\rm
\cite{BM1} A subset $M$ of the  edge set of $G$, is called a \textit{matching} in $G$ if no two of the edges in $M$ are adjacent. In other words, if for any two edges $e$ and $f$ in $M$, both the end vertices of $e$ are different from the end vertices of $f$.}
\end{definition}

\begin{definition}{\rm
\cite{BM1} A \textit{perfect matching} of a graph $G$ is a matching of $G$ containing $n/2$ edges, the largest possible, meaning perfect matchings are only possible on graphs with an even number of vertices.  A perfect matching sometimes called a \textit{complete matching} or \textit{1-factor}}.
\end{definition}

\begin{definition}{\rm
\cite{BM1} The matching number of a graph $G$, denoted by $\nu(G)$, is the size of a maximal independent edge set. It is also known as \textit{edge independence number}. The matching number $\nu(G)$  satisfies the inequality $\nu(G)\leq\lfloor \frac{n}{2}\rfloor $. \newline Equality occurs only for a perfect matching and graph $G$ has a perfect matching if and only if $|G|=2~\nu(G)$, where $|G|=n$ is the vertex count of $G$.}
\end{definition}

\begin{definition}{\rm
\cite{BM1}A \textit{maximum independent set} in a line graph corresponds to maximum matching in the original graph. } 
\end{definition}
In this paper, we discussed the sum and product of the independence numbers of certain class of graphs and their line graphs.


\section{New Results}
\begin{definition}{\rm
\cite{RJW} The line graph $L(G)$ of a simple graph $G$ is the graph whose vertices are in one-one
correspondence with the edges of $G$, two vertices of $L(G)$ being adjacent if and only if
the corresponding edges of $G$ are adjacent.}
\end{definition}

\begin{theorem}\label{th1}{\rm
\cite{BB} The independence number of the line graph of a graph $G$ is equal to the matching number of $G$.}
\end{theorem}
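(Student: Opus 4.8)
The plan is to establish the equality $\alpha(L(G)) = \nu(G)$ by exhibiting a cardinality-preserving bijection between the independent sets of $L(G)$ and the matchings of $G$, and then transferring the extremal (maximum cardinality) property across that bijection. The starting point is the defining correspondence from Definition of the line graph: there is a canonical bijection $\phi \colon V(L(G)) \to E(G)$ sending each vertex of $L(G)$ to the edge of $G$ it represents, and two vertices $u, v \in V(L(G))$ are adjacent in $L(G)$ if and only if the edges $\phi(u)$ and $\phi(v)$ are adjacent in $G$ (that is, they share a common endpoint).

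First I would fix a set $S \subseteq V(L(G))$ and prove the key equivalence: $S$ is an independent set in $L(G)$ if and only if $\phi(S)$ is a matching in $G$. For the forward direction, if $S$ is independent then no two vertices of $S$ are adjacent in $L(G)$, so by the adjacency rule above no two edges of $\phi(S)$ share an endpoint, which is precisely the definition of a matching. The reverse direction runs symmetrically: if $\phi(S)$ is a matching then no two of its edges are adjacent in $G$, hence no two vertices of $S$ are adjacent in $L(G)$, so $S$ is independent. Since $\phi$ is a bijection, $|S| = |\phi(S)|$ throughout, so this equivalence matches independent sets and matchings size-for-size.

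Having set up this correspondence, the conclusion follows by a standard extremal transfer. Taking a maximum independent set $S^{\ast}$ in $L(G)$, its image $\phi(S^{\ast})$ is a matching of the same size, so $\nu(G) \ge |\phi(S^{\ast})| = |S^{\ast}| = \alpha(L(G))$. Conversely, taking a maximum matching $M^{\ast}$ of $G$, the preimage $\phi^{-1}(M^{\ast})$ is an independent set of $L(G)$ of the same size, giving $\alpha(L(G)) \ge |\phi^{-1}(M^{\ast})| = |M^{\ast}| = \nu(G)$. Combining the two inequalities yields $\alpha(L(G)) = \nu(G)$.

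I do not expect any serious obstacle here, since the result is essentially a direct unwinding of the definition of the line graph together with the definitions of independence number and matching number. The only point requiring care is to verify the adjacency equivalence in both directions so that the bijection genuinely respects the independent-set/matching structure, and to check that extremality (maximum cardinality) is preserved by the bijection rather than merely the existence of such sets.
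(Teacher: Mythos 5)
Your proof is correct: the equivalence ``$S$ independent in $L(G)$ $\Leftrightarrow$ $\phi(S)$ a matching in $G$'' follows immediately from the adjacency rule defining $L(G)$, and your two-inequality extremal transfer is sound. Note that the paper itself gives no proof of this statement --- it is quoted as a known result with a citation to Bollob\'as --- so there is no argument to compare against; your write-up is the standard direct unwinding of the definitions and would serve as a complete, self-contained justification.
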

\begin{proposition}{\rm 
The sum of the independence number of a complete graph and its line graph is $\lfloor \frac{n}{2}\rfloor +1$ and their product is  $\lfloor \frac{n}{2}\rfloor$.}
\end{proposition}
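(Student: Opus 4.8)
The plan is to compute each of the two independence numbers separately and then simply add and multiply them, since the proposition is a direct application of Theorem~\ref{th1} together with two elementary facts about complete graphs. First I would observe that in $K_n$ every pair of distinct vertices is adjacent, so no independent set can contain two vertices; hence an independent set has size at most $1$, and since any single vertex forms an independent set, we get $\alpha(K_n) = 1$.

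Next I would invoke Theorem~\ref{th1}, which tells us that the independence number of the line graph equals the matching number of the base graph, i.e. $\alpha(L(K_n)) = \nu(K_n)$. So the remaining task is to determine $\nu(K_n)$. I would argue that $K_n$ always admits a matching of size $\lfloor n/2 \rfloor$: simply pair the vertices up greedily, which is always possible because every two unmatched vertices are joined by an edge, leaving at most one vertex unmatched when $n$ is odd. Combined with the general upper bound $\nu(G) \le \lfloor n/2 \rfloor$ recorded in the definition of matching number, this forces $\nu(K_n) = \lfloor n/2 \rfloor$, and therefore $\alpha(L(K_n)) = \lfloor n/2 \rfloor$.

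With both values in hand, the conclusion is immediate: the sum is
\[
\alpha(K_n) + \alpha(L(K_n)) = 1 + \left\lfloor \tfrac{n}{2} \right\rfloor,
\]
and the product is
\[
\alpha(K_n)\cdot \alpha(L(K_n)) = 1 \cdot \left\lfloor \tfrac{n}{2} \right\rfloor = \left\lfloor \tfrac{n}{2} \right\rfloor,
\]
matching the claimed expressions.

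Honestly, there is no real obstacle in this argument; the only point requiring a sentence of care is verifying that a complete graph actually attains the matching upper bound $\lfloor n/2 \rfloor$ (rather than merely being bounded by it), but the greedy pairing makes this transparent, splitting into the even case (a perfect matching exists) and the odd case (one vertex is necessarily left out). I would make sure to state explicitly which earlier facts are being used — $\alpha(K_n)=1$, Theorem~\ref{th1}, and the matching-number inequality — so that the reader sees the proposition is essentially a bookkeeping combination of known results.
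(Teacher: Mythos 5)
Your proposal is correct and follows essentially the same route as the paper: establish $\alpha(K_n)=1$ from completeness, apply Theorem~\ref{th1} to reduce $\alpha(L(K_n))$ to $\nu(K_n)=\lfloor n/2\rfloor$, and combine. The only difference is that you explicitly justify that $K_n$ attains the matching bound via greedy pairing, a detail the paper simply asserts.
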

\begin{proof}
The independence number of a complete graph $K_n$ on $n$ vertices is $1$, since each vertex is joined with every other vertex of the graph $G$.  
By theorem \ref{th1}, the independence number of the line graph of $K_n$ is the matching number of $K_n = \lfloor \frac{n}{2}\rfloor $.\newline Therefore
\begin{center}
$\alpha(K_n)+\alpha(L(K_n))=\lfloor \frac{n}{2}\rfloor+1$ and 
\end{center}  
\begin{center}
$\alpha(K_n).\alpha(L(K_n))=\lfloor \frac{n}{2}\rfloor$.
\end{center}
\end{proof}

\begin{proposition}
For a bipartite graph $B_{m,n}$, 
\begin{center}
$\alpha(B_{m,n}) + \alpha(L(B_{m,n}))=m+n$  and
\end{center}
 \begin{center}
$\alpha(B_{m,n}) . \alpha(L(B_{m,n}))=mn$
 \end{center}
\end{proposition}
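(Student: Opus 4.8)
The plan is to recognize $B_{m,n}$ as the complete bipartite graph with parts $X$ and $Y$ of sizes $m$ and $n$, and then to split both identities into two independent computations: the independence number of $B_{m,n}$ itself, and --- via Theorem \ref{th1} --- the independence number of its line graph, which equals the matching number of $B_{m,n}$. Since the claimed sum $m+n$ and product $mn$ are symmetric in $m$ and $n$, I may assume without loss of generality that $m \ge n$, which keeps the max/min bookkeeping clean.

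First I would compute $\alpha(B_{m,n})$. Any two vertices lying in the same part are non-adjacent, whereas every vertex of $X$ is adjacent to every vertex of $Y$. Consequently an independent set cannot contain vertices from both parts, so it is entirely contained in $X$ or entirely in $Y$; the largest such set is the larger of the two parts, giving $\alpha(B_{m,n}) = \max(m,n) = m$.

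Next I would determine the matching number $\nu(B_{m,n})$. A matching saturates at most $\min(m,n)$ vertices on each side, so $\nu(B_{m,n}) \le \min(m,n) = n$; conversely, matching the $n$ vertices of $Y$ to $n$ distinct vertices of $X$ realizes a matching of exactly this size, whence $\nu(B_{m,n}) = n$. Applying Theorem \ref{th1} then yields $\alpha(L(B_{m,n})) = \nu(B_{m,n}) = n$.

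Combining the two computations gives $\alpha(B_{m,n}) + \alpha(L(B_{m,n})) = m + n$ and $\alpha(B_{m,n}) \cdot \alpha(L(B_{m,n})) = mn$, as required. I expect no serious obstacle here; the only step demanding a little care is separating the roles of $m$ and $n$ (the max/min identification), which is precisely why I fix $m \ge n$ at the outset so that the final symmetric expressions $m+n$ and $mn$ emerge unambiguously regardless of which part is larger.
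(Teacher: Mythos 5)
Your proposal is correct and follows essentially the same route as the paper: identify $\alpha(B_{m,n})=\max(m,n)$ and, via Theorem \ref{th1}, $\alpha(L(B_{m,n}))=\nu(B_{m,n})=\min(m,n)$, then add and multiply. You merely supply the small justifications (why an independent set lies in one part, why the matching number is $\min(m,n)$) and the explicit reading of $B_{m,n}$ as the \emph{complete} bipartite graph that the paper leaves implicit.
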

\begin{proof}
Without the loss of generality, let $m<n$.  The independence number of a bipartite graph, $\alpha(B_{m,n}) = max(m,n)=n$ and that of its line graph, $\alpha(L(B_{m,n}))$ = matching number of $B_{m,n}=\nu(B_{m,n})= min(m,n)=m$.\newline Therefore, 
\begin{center}
$\alpha(B_{m,n}) + \alpha(L(B_{m,n}))=m+n$  and
\end{center}
 \begin{center}
$\alpha(B_{m,n}) . \alpha(L(B_{m,n}))=mn$
 \end{center}
\end{proof}

\begin{definition}{\rm
\cite{FH} For $n\geq 3$, a \textit{wheel graph} $W_{n+1}$ is the graph $K_1+ C_{n}$.  A wheel graph $W_{n+1}$ has $n+1$ vertices and $2n$ edges.}
\end{definition}

\begin{theorem}{\rm
For $n\geq 3$, $\alpha(W_{n+1})+\alpha(L(W_{n+1}))= 2 \lfloor \frac{n}{2}\rfloor $ and $\alpha(W_{n+1}).\alpha(L(W_{n+1}))= (\lfloor \frac{n}{2}\rfloor)^2$.}
\end{theorem}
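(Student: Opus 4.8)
The plan is to evaluate $\alpha(W_{n+1})$ and $\alpha(L(W_{n+1}))$ separately and then read off the sum and product. For the first quantity I would argue structurally: write $W_{n+1}=K_1+C_n$ with hub $h$ and rim cycle $v_1v_2\cdots v_n$. Since $h$ is adjacent to every rim vertex, any independent set that contains $h$ must equal $\{h\}$, while any independent set avoiding $h$ is simply an independent set of the rim $C_n$. Hence $\alpha(W_{n+1})=\max\{1,\alpha(C_n)\}$, and using the standard value $\alpha(C_n)=\lfloor n/2\rfloor$ (valid for all $n\ge 3$) this gives $\alpha(W_{n+1})=\lfloor n/2\rfloor$.

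For the second quantity I would invoke Theorem~\ref{th1}, which reduces the computation to the matching number: $\alpha(L(W_{n+1}))=\nu(W_{n+1})$. The remaining task is therefore purely about matchings in the wheel. I would first exhibit an explicit matching by pairing consecutive rim vertices $v_1v_2,\,v_3v_4,\dots$, which already yields $\lfloor n/2\rfloor$ independent edges, and then argue that no larger matching exists, combining this with the general bound $\nu(G)\le\lfloor |V(G)|/2\rfloor=\lfloor (n+1)/2\rfloor$. Once $\nu(W_{n+1})=\lfloor n/2\rfloor$ is established, the two displays follow immediately: $\alpha(W_{n+1})+\alpha(L(W_{n+1}))=\lfloor n/2\rfloor+\lfloor n/2\rfloor=2\lfloor n/2\rfloor$ and $\alpha(W_{n+1})\cdot\alpha(L(W_{n+1}))=(\lfloor n/2\rfloor)^2$.

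The step I expect to be the main obstacle is pinning down $\nu(W_{n+1})$ exactly, and here the parity of $n$ is the delicate point. When $n$ is even the rim cycle carries a perfect matching of size $n/2$ that leaves only the hub unsaturated, so the value $\lfloor n/2\rfloor$ is clearly optimal. When $n$ is odd, however, the maximum rim matching covers all but one rim vertex, and that leftover vertex can be matched to the hub along a spoke; one must check carefully whether this augmenting edge pushes the matching number up to $\lceil n/2\rceil$ rather than $\lfloor n/2\rfloor$. Verifying that the claimed formula indeed holds in the odd case — or isolating the hypotheses under which it does — is the crux of the argument, and is where I would spend most of the effort.
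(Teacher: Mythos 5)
Your computation of $\alpha(W_{n+1})=\lfloor n/2\rfloor$ is correct and matches the paper's (the paper just carries out the two parity cases for $\alpha(C_n)$ explicitly). The real issue is exactly the one you flag at the end: for odd $n$ the spoke edge does augment the rim matching, and the matching number genuinely is $\lceil n/2\rceil=(n+1)/2$, not $\lfloor n/2\rfloor$. Concretely, $W_{n+1}$ has $n+1$ vertices, which is even when $n$ is odd; matching the hub $h$ to $v_n$ and taking $v_1v_2,\,v_3v_4,\dots,v_{n-2}v_{n-1}$ gives a perfect matching with $(n+1)/2$ edges. So the stated theorem fails for every odd $n$: for $n=3$ we have $W_4=K_4$, where $\alpha+\alpha(L)=1+2=3$, whereas the theorem asserts $2\lfloor 3/2\rfloor=2$ --- and this even contradicts the paper's own proposition on complete graphs, which gives $\lfloor n/2\rfloor+1=3$ for $K_4$. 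Your proposal therefore cannot be completed as written; the honest conclusion is that the formula holds only for even $n$, while for odd $n$ the sum is $\lfloor n/2\rfloor+\lceil n/2\rceil=n$ and the product is $\lfloor n/2\rfloor\cdot\lceil n/2\rceil$.

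It is worth noting that the paper's own proof stumbles at precisely this point: in its Case 2 (odd $n$) it builds the rim matching of size $(n-1)/2$, observes that ``there is one edge $e_j'$ that is incident on $K_1$ and is not adjacent to any of the edges in $M$,'' and then nevertheless concludes $|M|=(n-1)/2$, even though the spoke it has just exhibited should be added to $M$, giving $(n+1)/2$. Apart from this, your route --- the hub-versus-rim case analysis for the independence number, the reduction to $\nu(W_{n+1})$ via the line-graph theorem, and an explicit rim matching combined with the bound $\nu\le\lfloor(n+1)/2\rfloor$ --- is essentially the paper's argument, organized more cleanly; your instinct to distrust the odd case was the right one.
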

\begin{proof}
Let $I$ be the maximal independent set of a wheel graph $W_{n+1}$.  By definition a wheel graph is defined to be the graph $K_1+ C_{n}$.  If $K_1\in~I$, no vertex of $C_{n}$ can be in $I$.  Hence let $K_1\notin I$.  

Case - 1 : If $n$ is even, $C_{n}$ is an even cycle.  Then $C_{n}= v_1,v_2,....,v_{n},v_1$. Without loss of generality, choose $v_1$ to $I$.  Since $v_2$ is adjacent to $v_1$, $v_2\notin I$.  Now choose $v_3$ to $I$, since it is not adjacent to $v_1$.  Now $v_4$ cannot be selected to $I$, since it is adjacent to $v_3$.  Proceeding in this way, finite number of times, the vertices of the form $v_i, i=1,3,5,\ldots, n-1$ belong to $I$.  That is, $\alpha(C_n) = \frac{n}{2}$.

Case - 2 : If $n$ is odd, $C_{n}$ is an odd cycle.  Then $C_{n}= v_1,v_2,....,v_{n},v_1$. Without loss of generality, choose $v_1$ to $I$.  Since $v_2$ is adjacent to $v_1$, $v_2\notin I$.  Now choose $v_3$ to $I$, since it is not adjacent to $v_1$.  Now $v_4$ cannot be selected to $I$, since it is adjacent to $v_3$.  Proceeding in this way, finite number of times, the vertices of the form $v_i, i=1,3,5,\ldots, n-2$ belong to $I$.  That is, $\alpha(C_n) =  \frac{n-1}{2}$.
\\From the above two cases, it is clear that the independence number of a wheel graph $W_{n+1}$    is $\lfloor \frac{n}{2}\rfloor$.

Now, consider the line graph of the wheel graph $W_{n+1}$.  By theorem \ref{th1}, the independence number of $L(W_{n+1})$ is equal to the matching number of $W_{n+1}$.

Let $M$ be a maximal matching set of the wheel graph $W_{n+1}$.  Then $\alpha (L(W_{n+1}))=\nu(W_{n+1})$.  Let $e_1,e_2,....,e_{n}$ be the edges the outer cycle taken in the order of the wheel graph $W_{n+1}$ and let $e_1',e_2',....,e_{n}'$ be the edges incident on the vertex of $K_1$.  

Case - 1 : If $n$ is even, without loss of generality choose $e_1$ to the set $M$.  Now, $e_2\notin M$ as $e_2$ is adjacent to $e_1$.  Now take the edge $e_3$ to $M$.  Since $e_4$ is adjacent to $e_3$,  $e_4\notin M$.  Proceeding in this manner, a finite number of times, an edge of the form $e_i, i=1,3,5,\ldots, n-1$ belong to $M$. In this case  no edge of the form $e_j'$ can be a member of $M$.  That is, $|M|= \frac{n}{2}$. 

Case - 2 : If $n$ is odd, an edge of the form $e_i, i=1,3,5,\ldots, n-2$ belong to $M$.  Moreover there is one edge $e_j'$ that is incident on $K_1$ and is not adjacent to any of the edges in $M$.  That is, $|M|= \frac{n-1}{2}$.  

From the above two cases, we follow that $\alpha(L(W_{n+1}))= \lfloor \frac{n}{2}\rfloor $

Therefore, $\alpha(W_{n+1})+\alpha(L(W_{n+1}))= 2 \lfloor \frac{n}{2}\rfloor $ and $\alpha(W_{n+1}).\alpha(L(W_{n+1}))= (\lfloor \frac{n}{2}\rfloor)^2$.
\end{proof}

\begin{definition}{\rm
\cite{JAG} \textit{Helm graphs} are graphs obtained from a wheel by attaching one pendant edge to each vertex of the cycle.}
\end{definition}

\begin{theorem}
For a helm graph $H_n$, $n\geq 3$,
$\alpha(H_n)+\alpha(L(H_n))= 2n+1$ and
$\alpha(H_n).\alpha(L(H_n))= n(n+1)$.
\end{theorem}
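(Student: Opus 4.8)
The plan is to name the three kinds of vertices of $H_n$ and compute $\alpha(H_n)$ and $\nu(H_n)=\alpha(L(H_n))$ separately, then combine. Write $c$ for the hub (the $K_1$ of the underlying wheel), $v_1,v_2,\dots,v_n$ for the rim vertices forming the cycle $C_n$, and $u_1,u_2,\dots,u_n$ for the pendant vertices, where each $u_i$ is joined to $v_i$ by a pendant edge. Then $H_n$ has $2n+1$ vertices and $3n$ edges; the hub $c$ is adjacent to every $v_i$, consecutive rim vertices are adjacent, and each pendant $u_i$ has degree one.

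First I would compute the independence number. The set $I=\{c,u_1,u_2,\dots,u_n\}$ is independent, since $c$ is adjacent only to rim vertices and each $u_i$ is adjacent only to $v_i$; hence $\alpha(H_n)\ge n+1$. For the matching upper bound I would partition the vertex set into the singleton $\{c\}$ together with the $n$ pairs $\{v_i,u_i\}$. Each pair is joined by the pendant edge $v_iu_i$, so any independent set contains at most one vertex from each pair, and at most one from $\{c\}$; as there are $n+1$ parts in all, every independent set has size at most $n+1$. Therefore $\alpha(H_n)=n+1$. The underlying point worth stating explicitly is that once all $n$ pendants are taken, the hub is available \emph{for free} (it is nonadjacent to every $u_i$), which is exactly why this value is attained.

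Next I would compute the matching number, since Theorem \ref{th1} gives $\alpha(L(H_n))=\nu(H_n)$. The $n$ pendant edges $v_1u_1,v_2u_2,\dots,v_nu_n$ are pairwise nonadjacent and thus form a matching $M$ of size $n$, covering all $2n$ rim and pendant vertices and leaving only $c$ uncovered. Since $H_n$ has $2n+1$ vertices, $\nu(H_n)\le\lfloor(2n+1)/2\rfloor=n$, so $M$ is maximum and $\nu(H_n)=n$. Consequently $\alpha(L(H_n))=n$.

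Combining the two computations, $\alpha(H_n)+\alpha(L(H_n))=(n+1)+n=2n+1$ and $\alpha(H_n)\cdot\alpha(L(H_n))=(n+1)\cdot n=n(n+1)$, as claimed. This theorem is largely routine; the only steps that require genuine (if modest) care are the two tight-example observations: choosing all pendants (plus the hub) for the independent set, and choosing the pendant edges for the matching. Each bound is then forced — the independence bound by the clique partition into pairs and a singleton, and the matching bound by the odd vertex count $2n+1$.
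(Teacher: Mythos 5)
Your proof is correct and uses the same extremal structures as the paper: the independent set consisting of the hub together with all $n$ pendant vertices, and the matching consisting of the $n$ pendant edges. The only difference is that you also supply the optimality arguments (the clique partition into $\{c\}$ and the pairs $\{v_i,u_i\}$ for the bound $\alpha(H_n)\le n+1$, and the vertex count $2n+1$ for the bound $\nu(H_n)\le n$), which the paper simply asserts; this makes your write-up more complete without changing the approach.
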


\begin{proof}
Let $I$ be a maximal independent set of a helm graph $H_n$.  Then, its elements are the set of all pendent vertices together with the vertex of $K_1$.  So $I$ consists of $n+1$ elements.  There fore, $\alpha(H_n)=n+1$.

Now consider the line graph of the helm graph $H_{n}$.  By theorem \ref{th1}, the independence number of $L(H_{n})$ is equal to the matching number of $H_{n}$.  Let $M$ be a maximal matching set of the helm graph $H_{n}$.  Then $\alpha(L(H_{n}))=\nu(H_{n})$.  Let $e_1,e_2,....,e_{n}$ be the pendent edges incident with the outer cycle taken in order of the helm graph $H_{n}$.  Then if we take these $n$ edges to $M$, it will be a maximum matching in $H_n$.  This means the matching number of a helm graph, $\nu(H_{n}) = n = \alpha(L(H_{n}))$.

Therefore, 

$\alpha(H_n)+\alpha(L(H_n))= 2n+1$ and
$\alpha(H_n).\alpha(L(H_n))= n(n+1)$.
\end{proof}

\begin{definition}{\rm
\cite{DBW} Given a vertex $x$ and a set $U$ of vertices, an $x$, $U-$fan is a set of paths from $x$ to $U$ such that any two of them share only the vertex $x$.  A $U-$fan is denoted by $F_{1,n}$.}
\end{definition}

\begin{theorem}
For a fan graph $F_{1,n}$, 
\[\alpha(F_{1,n})+\alpha(L(F_{1,n}))=
\left\{
\begin{array}{ll}
n & \text{if $n$ is even} \\
n+1 & \text{if $n$ is odd}
\end{array}\right.\]
\[\alpha(F_{1,n}).\alpha(L(F_{1,n}))=
\left\{
\begin{array}{ll}
\frac{n^2}{4} & \text{if $n$ is even} \\
\frac{(n+1)^2}{4} & \text{if $n$ is odd}
\end{array}\right.\]
\end{theorem}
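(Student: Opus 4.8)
The plan is to realize the fan graph $F_{1,n}$ concretely as the join $K_1 + P_n$, with hub vertex $x$ and path $P_n$ given by $v_1, v_2, \ldots, v_n$ (edges $v_iv_{i+1}$ for $1 \le i \le n-1$), together with the spokes $xv_i$ for every $i$. Since $x$ is adjacent to every path vertex, no independent set can contain $x$ alongside any $v_i$; hence a maximum independent set is either the singleton $\{x\}$ or a maximum independent set of $P_n$. Picking the alternate vertices $v_1, v_3, v_5, \ldots$ exactly as in the cycle argument of the wheel-graph theorem gives an independent set of size $\lceil n/2 \rceil$, which exceeds $1$ for $n \ge 2$, so $\alpha(F_{1,n}) = \lceil n/2 \rceil$. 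I would then split on parity: this is $n/2$ when $n$ is even and $(n+1)/2$ when $n$ is odd.

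For the line graph I would invoke Theorem \ref{th1} to replace $\alpha(L(F_{1,n}))$ by the matching number $\nu(F_{1,n})$, and exhibit a maximum matching by parity, mirroring the wheel-graph bookkeeping. When $n$ is even, the path admits the perfect matching $\{v_1v_2, v_3v_4, \ldots, v_{n-1}v_n\}$ of size $n/2$, saturating all path vertices and leaving only the hub $x$ unmatched; since $F_{1,n}$ then has $n+1$ (odd) vertices, the bound $\nu(G)\le\lfloor |V|/2\rfloor$ forces this matching to be maximum, so $\nu(F_{1,n}) = n/2$. When $n$ is odd, the matching $\{v_1v_2, v_3v_4, \ldots, v_{n-2}v_{n-1}\}$ of size $(n-1)/2$ leaves exactly $v_n$ (and $x$) unmatched, and the spoke $xv_n$ extends it to a matching of size $(n+1)/2 = \lfloor (n+1)/2 \rfloor$, which again meets the upper bound and is therefore maximum.

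Combining the two computations, $\alpha(F_{1,n}) = \nu(F_{1,n})$ in both parities, so the sum is $2\alpha$ and the product is $\alpha^2$: for $n$ even these yield $n$ and $n^2/4$, and for $n$ odd they yield $n+1$ and $(n+1)^2/4$, as claimed. The main obstacle I anticipate is the matching-number upper bound rather than the independence number, which is routine: one must argue that the displayed matchings are genuinely maximum and not merely maximal. The even case is clean because the odd vertex count forces $\nu \le n/2$ immediately, whereas the odd case relies on the general estimate $\nu(G) \le \lfloor |V|/2 \rfloor$ recorded in the matching-number definition to certify maximality after the hub is matched through the leftover endpoint $v_n$.
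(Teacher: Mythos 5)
Your proposal is correct and follows essentially the same route as the paper: realize $F_{1,n}$ as $K_1+P_n$, observe that the hub excludes the whole path so $\alpha(F_{1,n})=\alpha(P_n)=\lceil n/2\rceil$, and then use Theorem \ref{th1} to reduce $\alpha(L(F_{1,n}))$ to $\nu(F_{1,n})$, which you compute by exhibiting the alternating path matching plus (in the odd case) one spoke. The one place you improve on the paper is in certifying that the exhibited matchings are genuinely maximum via the bound $\nu(G)\le\lfloor |V(G)|/2\rfloor$, a step the paper's greedy construction asserts but does not justify.
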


\begin{proof}
Let $I$ be a maximal independent set of a fan graph $F_{1,n}$.  By the definition a fan graph is defined to be the graph $K_1+ P_{n}$.  If $K_1\in~I$, no vertex of $P_{n}$ can be in $I$.  Hence let $K_1\notin I$.

Case-1: If $n$ is even, then $P_{n}$ is an odd path.  Then, $P_{n}= v_1,v_2,....,v_{n}$. Without loss of generality, choose $v_1$ to $I$.  Since $v_2$ is adjacent to $v_1$, $v_2\notin I$.  Now choose $v_3$ to $I$, since it is not adjacent to $v_1$.  Now $v_4$ cannot be selected to $I$, since it is adjacent to $v_3$.  Proceeding in this way, finite number of times, the vertices of the form $v_i, i=1,3,5,\ldots, n-1$ belong to $I$.  That is, $\alpha(P_n) = \frac{n}{2}$.

Case-2 : if $n$ is odd, then $P_{n}$ is an even path.  Then, $P_{n}= v_1,v_2,....,v_{n}$. Without loss of generality, choose $v_1$ to $I$.  Since $v_2$ is adjacent to $v_1$, $v_2\notin I$.  Now choose $v_3$ to $I$, since it is not adjacent to $v_1$.  Now, $v_4$ cannot be selected to $I$, since it is adjacent to $v_3$.  Proceeding in this way, finite number of times, the vertices of the form $v_i, i=1,3,5,\ldots, n$ belong to $I$.  That is, $\alpha(P_n) =  \frac{n+1}{2}$.
\\From the above two cases it is clear that the independence number of a fan graph $F_{1,n}$    is either $\frac{n}{2}$ or $\frac{n+1}{2}$, depending on $n$ is even or odd.

Now consider the line graph of the fan graph $F_{1,n}$.  By theorem \ref{th1}, the independence number of $L(F_{1,n})$ is equal to the matching number of $F_{1,n}$.

Let $M$ be a maximal matching set of the fan graph $F_{1,n}$.  Then $\alpha(L(F_{1,n}))=\nu(F_{1,n})$.  Let $e_1,e_2,....,e_{n-1}$ be the edges the outer path taken in order of the fan graph $F_{1,n}$ and let $e_1',e_2',....,e_{n}'$ be the edges incident with the vertex of $K_1$.  

Case - 1 : Without loss of generality choose $e_1$ to the set $M$.  Now, since $e_2$ is adjacent to $e_1$,  $e_2\notin M$.  Now take the edge $e_3$ to $M$.  Since $e_4$ is adjacent to $e_3$,  $e_4\notin M$.  Proceeding in this manner, a finite number of times, an edge of the form $e_i, i=1,3,5,\ldots, n-1$ belong to $M$. In this case  no edge of the form $e_j'$ can be a member of $M$.  That is,$|M|= \frac{n}{2}$. 

Case - 2 : If $n$ is odd, an edge $e_i, i=1,3,5,\ldots, n-2$ belongs to $M$.  Moreover there is one edge $e_j'$ that is incident on $K_1$ and is not adjacent to any of the edges in $M$.  That is, $|M|= \frac{n+1}{2}$.  

From the above two cases, we follow that $\alpha(L(F_{1,n}))= \nu(F_{1,n})$ is either $\frac{n}{2}$ or $\frac{n+1}{2}$ depending on $n$ is even or odd.

Therefore
For a fan graph $F_{1,n}$, 
\[\alpha(F_{1,n})+\alpha(L(F_{1,n}))=
\left\{
\begin{array}{ll}
n & \text{if $n$ is even} \\
n+1 & \text{if $n$ is odd}
\end{array}\right.\]
\[\alpha(F_{1,n}).\alpha(L(F_{1,n}))=
\left\{
\begin{array}{ll}
\frac{n^2}{4} & \text{if $n$ is even} \\
\frac{(n+1)^2}{4} & \text{if $n$ is odd}
\end{array}\right.\]

\end{proof}
\begin{definition}{\rm
\cite{AVJ, IS} An $n-$sun or a \textit{trampoline}, denoted by $S_n$, is a chordal graph on $2n$ vertices, where $n \geq 3$, whose vertex set can be partitioned into two sets $U = \{u_1, u_2, u_3, . . . , u_n\}$ and $W = \{w_1, w_2, w_3, . . . , w_n\}$ such that $U$ is an independent set of $G$ and $u_i$ is adjacent to $w_j$ if and only if $j = i$ or $j = i + 1 (\mod{n})$. A \textit{complete sun} is a sun $G$ where the induced subgraph  $\langle U\rangle$ is complete.}
\end{definition}
\begin{theorem}{\rm
For a sun graph $S_n$, $n\geq 3$,
$\alpha(S_n)+\alpha(L(S_n))= 2n$ and
$\alpha(S_n).\alpha(L(S_n))= n^2$.}
\end{theorem}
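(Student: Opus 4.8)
The plan is to prove that both independence numbers equal $n$, so that their sum is $2n$ and their product is $n^{2}$. I would organise the argument exactly as in the preceding theorems: first pin down $\alpha(S_n)$ by a direct independent-set argument, then pass to the line graph through Theorem~\ref{th1}, which reduces $\alpha(L(S_n))$ to the matching number $\nu(S_n)$.

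First I would show $\alpha(S_n)=n$. By definition $U=\{u_1,\dots,u_n\}$ is an independent set with $n$ vertices, so $\alpha(S_n)\ge n$. For the opposite inequality, note that the $n$ pairs $\{u_i,w_i\}$, $1\le i\le n$, partition the $2n$ vertices of $S_n$, and each such pair is an edge: since $u_i$ is adjacent to $w_j$ precisely when $j\equiv i$ or $j\equiv i+1\pmod{n}$, the choice $j=i$ shows $u_iw_i\in E(S_n)$. Any independent set can contain at most one endpoint of each of these $n$ pairwise-disjoint edges, hence has size at most $n$, and therefore $\alpha(S_n)=n$.

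Next I would compute $\alpha(L(S_n))$. By Theorem~\ref{th1} it equals $\nu(S_n)$, so it suffices to show that $S_n$ has a perfect matching. I would take $M=\{u_iw_i:1\le i\le n\}$; these are genuine edges by the adjacency rule above, they are pairwise non-adjacent because all $2n$ endpoints are distinct, and together they cover every vertex. Since $\nu(S_n)\le\lfloor\frac{2n}{2}\rfloor=n$ in general and $M$ attains this value, we get $\nu(S_n)=n$ and hence $\alpha(L(S_n))=n$. Combining the two computations yields $\alpha(S_n)+\alpha(L(S_n))=2n$ and $\alpha(S_n)\cdot\alpha(L(S_n))=n^{2}$.

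The step I expect to need the most care is the upper bound $\alpha(S_n)\le n$, because the definition specifies only that $U$ is independent and fixes the $U$--$W$ adjacencies, while leaving the edges inside $W$ essentially unspecified (indeed the ``complete sun'' clause appears garbled, as $U$ cannot simultaneously be independent and complete). The saving observation is that both halves of the proof use only two facts --- that $U$ is independent of size $n$, and that each $u_iw_i$ is an edge --- so that the partition-into-edges bound and the explicit perfect matching are both insensitive to whatever further edges $W$ may carry. Consequently the identity holds for every graph satisfying the stated $U$--$W$ conditions, and no case analysis on the internal structure of $W$ is required.
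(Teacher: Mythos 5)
Your proposal is correct, and its overall shape matches the paper's: both arguments show $\alpha(S_n)=n$ using the independent set $U$ and then reduce $\alpha(L(S_n))$ to $\nu(S_n)=n$ via Theorem~\ref{th1} and a matching built from the $u_i$--$W$ edges. The differences are worth noting, though, because they are in your favour. The paper simply asserts that ``$U$ itself is the maximum independent set,'' giving no upper bound; your observation that the $n$ pairwise-disjoint edges $u_iw_i$ partition $V(S_n)$, so that any independent set meets each in at most one vertex, is exactly the missing justification, and the same matching $M=\{u_iw_i\}$ then does double duty as the certificate that $\nu(S_n)=n$. Moreover, the paper's proof silently works with the complete sun --- it takes $W$ to induce $K_n$ and builds its matching by choosing ``one edge among each pair'' of $u_i$--$W$ edges --- whereas your argument uses only the two facts guaranteed by the definition (that $U$ is an independent $n$-set and that each $u_iw_i$ is an edge), so it covers every sun regardless of the edges inside $W$. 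Your closing remark about the garbled ``complete sun'' clause (it should require $\langle W\rangle$, not $\langle U\rangle$, to be complete) is also apt. In short: same route, but your version closes a gap the paper leaves open and proves the statement in the generality in which it is stated.
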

\begin{proof}{\rm
Let $S_n$ be a sun graph on $2n$ vertices.  Let $V = \{v_1, v_2, v_3, . . . , v_n\}$ be the vertex set of $K_n$ and $U = \{u_1, u_2, u_3, . . . , u_n\}$ be the set of vertices attached to the edges of the outer ring of $K_n$.  Clearly, all the vertices in $U$ are independent and $U$ itself is the maximum independent set in $S_n$.  Therefore the independence number of $S_n$, $\alpha(S_n)=n$.  Let  $E_1 = \{e_1, e_2, e_3, . . . , e_n\}$ be the edge set of the outer rings of $K_n$.  Now, corresponding to each edge of the outer ring of $K_n$, there exist two edges connecting its end vertices in $S_n$.  Therefore, let $E_2 = \{e_1', e_2', e_3', . . . , e_n'\}$ be the edge set in $S_n$ such that the pair $(e_j',e_k')$ corresponds to the edge $e_i$ of the outer rings of $K_n$.  Clearly, one edge among each pair $(e_j',e_k')$ contributes to a maximal matching of $L_n$.  That is, $\nu(S_n)=n$.  Therefore,  for a sun graph $S_n$, $n\geq 3$,
$\alpha(S_n)+\alpha(L(S_n))= 2n$ and
$\alpha(S_n).\alpha(L(S_n))= n^2$. }
\end{proof}
\begin{definition}{\rm
\cite{IS} The $n-$sunlet graph is the graph on $2n$ vertices obtained by attaching $n$ pendant edges to a cycle graph $C_n$ and is denoted by $L_n$.}
\end{definition}
\begin{theorem}{\rm
For a sunlet graph $L_n$ on $2n$ vertices, $n\geq 3$,
$\alpha(L_n)+\alpha(L(L_n))= 2n$ and
$\alpha(L_n).\alpha(L(L_n))= n^2$.}
\end{theorem}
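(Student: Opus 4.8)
The plan is to compute the two quantities $\alpha(L_n)$ and $\alpha(L(L_n))$ separately and then combine them, following the same template used in the preceding theorems. Since the sunlet graph $L_n$ is obtained from the cycle $C_n$ by attaching one pendant vertex to each cycle vertex, I would label the cycle vertices $v_1, v_2, \ldots, v_n$ and their corresponding pendant vertices $u_1, u_2, \ldots, u_n$, so that $u_i$ has degree one and is adjacent only to $v_i$.

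First I would determine $\alpha(L_n)$. The natural candidate for a large independent set is the set $U = \{u_1, \ldots, u_n\}$ of all pendant vertices: these form an independent set of size $n$ because no two pendants are adjacent. To see that this is maximum, observe that $L_n$ has $2n$ vertices and contains a perfect matching (pair each $v_i$ with its pendant $u_i$), so $\nu(L_n) = n$; by the König-type bound $\alpha(L_n) \le 2n - \nu(L_n) = n$ for graphs admitting such structure, or more directly by exhibiting a clique cover / vertex cover of size $n$ given by $\{v_1, \ldots, v_n\}$. Thus $\alpha(L_n) = n$.

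Next I would compute $\alpha(L(L_n))$, which by Theorem \ref{th1} equals the matching number $\nu(L_n)$. As noted, the pendant edges $\{v_i u_i : i = 1, \ldots, n\}$ already form a perfect matching of size $n$, and since $\nu(L_n) \le \lfloor 2n/2 \rfloor = n$, this matching is maximum. Hence $\alpha(L(L_n)) = \nu(L_n) = n$. Combining the two computations gives $\alpha(L_n) + \alpha(L(L_n)) = 2n$ and $\alpha(L_n) \cdot \alpha(L(L_n)) = n^2$, as claimed.

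The only genuine obstacle is the upper bound argument for $\alpha(L_n) = n$; exhibiting the independent set of pendants and the perfect matching is immediate, but one must argue rigorously that no independent set exceeds size $n$. The cleanest justification is that $\{v_1, \ldots, v_n\}$ is a vertex cover of $L_n$ (every cycle edge has both endpoints in it, and every pendant edge $v_i u_i$ has the endpoint $v_i$ in it), so its complement $U$ is a maximum independent set. I expect this to be the one step requiring care, while the matching computation is routine given the perfect matching on pendant edges.
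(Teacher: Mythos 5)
Your computation matches the paper's: both identify the pendant vertices $U$ as a maximum independent set of size $n$ and the pendant edges $\{v_iu_i\}$ as a perfect matching, then invoke Theorem~\ref{th1} to get $\alpha(L(L_n))=\nu(L_n)=n$. In fact you are more careful than the paper, which simply asserts that $U$ is maximum without any upper-bound argument.

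One caution about the step you single out as ``the cleanest justification.'' The fact that $\{v_1,\dots,v_n\}$ is a vertex cover does \emph{not} by itself show that its complement $U$ is a \emph{maximum} independent set: the complement of a vertex cover is merely \emph{an} independent set, and it is maximum only when the cover is minimum (consider the star $K_{1,n}$, where the $n$ leaves form a vertex cover whose complement has size $1$). Exhibiting a vertex cover of size $n$ gives $\tau(L_n)\le n$, hence $\alpha(L_n)=2n-\tau(L_n)\ge n$ --- the wrong direction. The justification of yours that actually works is the other one you mention: the $n$ edges $v_iu_i$ partition $V(L_n)$ into $n$ cliques, so any independent set meets each in at most one vertex and $\alpha(L_n)\le n$; equivalently, since a matching of size $\nu$ forces any independent set to omit at least $\nu$ vertices, $\alpha(L_n)\le 2n-\nu(L_n)=n$. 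With that reading your proof is complete and, modulo this bookkeeping, follows the same route as the paper.
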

\begin{proof}{\rm
Let $L_n$ be a sunlet graph on $2n$ vertices.    Let $V = \{v_1, v_2, v_3, . . . , v_n\}$ be the vertex set of the cycle $C_n$ and $U = \{u_1, u_2, u_3, . . . , u_n\}$ be the set of pendent vertices attached to the vertices of the cycle $C_n$.  Clearly, all the vertices in $U$ are independent and $U$ itself is the maximum independent set in $L_n$.  Therefore, $\alpha(L_n)=n$.  Let  $E_1 = \{e_1, e_2, e_3, . . . , e_n\}$ be the edge set of $C_n$.  Now, corresponding to each edge of $C_n$, there exist two edges connecting its end vertices in $L_n$.  Therefore, let $E_2 = \{e_1', e_2', e_3', . . . , e_n'\}$ be the edge set in $L_n$ such that the pair $(e_j',e_k')$ corresponds to the edge $e_i$ of the cycle $C_n$.  Clearly, the set $E_2 = \{e_1', e_2', e_3', . . . , e_n'\}$ contributes to a maximal matching of $L_n$.  That is, $\alpha(L(L_n))=\nu(L_n)=n$.  Therefore,  for a sunlet graph $L_n$ on $2n$ vertices, $n\geq 3$,
$\alpha(L_n)+\alpha(L(L_n))= 2n$ and
$\alpha(L_n).\alpha(L(L_n))= n^2$. }
\end{proof}
\begin{definition}{\rm
\cite{FH} The \textit{armed crown} is a graph $G$ obtained by adjoining a path $P_{m}$ to every vertex of a cycle $C_n$.}
\end{definition}
\begin{theorem}{\rm
For an armed crown graph $G$ with a path $P_m$ and a cycle $C_n$, 
\[\alpha(G)+\alpha(L(G))=
\left\{\begin{array}{ll}
\lfloor\frac{n}{2}\rfloor \lbrack \frac{m+1}{2}+1\rbrack + \lbrack\frac{m-1}{2}\rbrack \lbrack n+\lceil\frac{n}{2}\rceil\rbrack  & \text{; if $m$, $n$ are odd} 
\\mn & \text{; otherwise}
\end{array}\right.\]\\
\[\alpha(G).\alpha(L(G))=
\left\{\begin{array}{ll}
\lbrack\lfloor\frac{n}{2}\rfloor\lbrack\frac{m+1}{2}\rbrack+\lceil\frac{n}{2}\rceil\lbrack\frac{m-1}{2}\rbrack\rbrack.\lbrack\lfloor\frac{n}{2}\rfloor+n\lbrack\frac{m-1}{2}\rbrack\rbrack
& \text{; if $m$, $n$ are odd} 
\\\frac{n^2m^2}{4} & \text{; otherwise}
\end{array}\right.\]}
\end{theorem}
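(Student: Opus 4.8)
The plan is to compute the two parameters separately and then assemble the four stated formulas by a case analysis on the parities of $m$ and $n$. First I would fix notation: write the cycle as $C_n = v_1 v_2 \cdots v_n v_1$ and denote the arm attached at $v_i$ by the path $v_i = u_{i,1}, u_{i,2}, \ldots, u_{i,m}$, so that $G$ has exactly $nm$ vertices, namely the cycle edges $v_i v_{i+1}$ (indices mod $n$) together with the arm edges $u_{i,j}u_{i,j+1}$.

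For the independence number I would analyze $G$ arm by arm, using that the only interaction between distinct arms is through the cycle vertices $u_{i,1}=v_i$. Viewing a single arm as a copy of $P_m$, its best independent contribution is $1+\alpha(P_{m-2})$ if the cycle vertex $v_i$ is selected (then $u_{i,2}$ is forbidden and $u_{i,3},\ldots,u_{i,m}$ is a $P_{m-2}$) and $\alpha(P_{m-1})$ if it is not. Using $\alpha(P_k)=\lceil k/2\rceil$, selecting $v_i$ gains exactly $1$ over not selecting it when $m$ is odd and gains nothing when $m$ is even. Hence for $m$ even every arm contributes $m/2$ irrespective of the cycle, giving $\alpha(G)=nm/2$; for $m$ odd the problem reduces to choosing a maximum independent set among the cycle vertices (of size $\lfloor n/2\rfloor$ in $C_n$), each chosen vertex upgrading its arm from $\frac{m-1}{2}$ to $\frac{m+1}{2}$, so that $\alpha(G)=\lfloor \tfrac n2\rfloor\tfrac{m+1}{2}+\lceil \tfrac n2\rceil \tfrac{m-1}{2}$.

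For $\alpha(L(G))$ I would invoke Theorem~\ref{th1} and exhibit an explicit matching of $G$ of size $\lfloor nm/2\rfloor$, which is maximum since $G$ has $nm$ vertices. When $m$ is even, matching each arm internally as $(u_{i,1}u_{i,2}),(u_{i,3}u_{i,4}),\ldots$ already saturates all $nm$ vertices; when $m$ is odd I match each arm as $(u_{i,2}u_{i,3}),\ldots,(u_{i,m-1}u_{i,m})$, leaving every cycle vertex free, and then add a maximum matching of $C_n$ of size $\lfloor n/2\rfloor$. A short count shows this matching has size $\lfloor nm/2\rfloor$ in every case, whence $\alpha(L(G))=\nu(G)=\lfloor nm/2\rfloor$.

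Finally I would combine the two computations. When $m$ or $n$ is even, both parameters equal $nm/2$, yielding sum $nm$ and product $n^2m^2/4$. When $m$ and $n$ are both odd, substituting $\lfloor n/2\rfloor=\frac{n-1}{2}$ and $\lceil n/2\rceil=\frac{n+1}{2}$ gives $\nu(G)=\lfloor \tfrac n2\rfloor+n\tfrac{m-1}{2}$, and adding this to the expression for $\alpha(G)$ produces the stated sum, while multiplying gives the stated product. The main obstacle is the both-odd case of the independence number: there the cycle constraint is genuinely binding, so I must argue carefully that the arm contributions decouple once the set of chosen cycle vertices is fixed, and that it suffices to maximize over independent sets of $C_n$ rather than over arbitrary vertex subsets, i.e.\ that no globally better independent set can arise from a non-independent collection of cycle vertices. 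The remaining steps are routine parity bookkeeping.
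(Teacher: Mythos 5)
Your proposal is correct and reproduces all four stated formulas, but it takes a genuinely tighter route than the paper. The paper's proof runs four separate parity cases and in each one only \emph{constructs} an independent set (walking around the cycle, with arm contributions alternating between $\frac{m+1}{2}$ and $\frac{m-1}{2}$ when $m$ is odd) and a matching (edge by edge along each arm and along the cycle), asserting rather than proving that these are maximum. You supply the missing optimality certificates: for the matching, the trivial bound $\nu(G)\le\lfloor |V(G)|/2\rfloor=\lfloor nm/2\rfloor$ shows your explicit matching is maximum and collapses the paper's four matching cases into a single line; for the independence number with $m$ odd, your reduction to maximizing the number of selected cycle vertices gives $\alpha(G)=n\cdot\frac{m-1}{2}+\lfloor\frac{n}{2}\rfloor=\lfloor\frac{n}{2}\rfloor\frac{m+1}{2}+\lceil\frac{n}{2}\rceil\frac{m-1}{2}$ together with a genuine upper bound. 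The ``obstacle'' you flag in the both-odd case is in fact easily dispatched and you should simply write it out: the arms $A_1,\dots,A_n$ (each containing its cycle vertex $v_i$) are pairwise vertex-disjoint, so any independent set $I$ satisfies $|I|=\sum_i|I\cap A_i|\le n\cdot\frac{m-1}{2}+|\{i: v_i\in I\}|$, and $\{i: v_i\in I\}$ is automatically independent in $C_n$ because adjacent cycle vertices are adjacent in $G$; hence the last term is at most $\alpha(C_n)=\lfloor n/2\rfloor$, and non-independent collections of cycle vertices never arise. One point you should make explicit: the paper's own preamble says each attached path has $m+1$ vertices, but its formulas (for instance a perfect matching of size $\frac{nm}{2}$) are only consistent with your convention that $G$ has $nm$ vertices, with the arm at $v_i$ contributing $m-1$ new ones; under the literal $n(m+1)$-vertex reading the stated identities fail, so fixing the convention is part of making the theorem true.
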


\begin{proof}
Note that the number of vertices of $P_m$ is $m+1$. Let $u_1,u_2,..., u_n$ be the vertices of the cycle $C_n$.  Let $u_i^1, u_i^2,....u_i^m$ be the vertices of the paths of length $m$ attached with $u_i$, $1\leq i \leq n$ with identification of $u_i$ and $u_i^m$.
\\\\Case-1: (when $m$ is even and $n$ is even) 
\\
Since $u_1^1, u_1^2,....u_1^m$ be the vertices of the first path attached to the first vertex $u_1$ of the cycle $C_n$, the maximal independent set consists of exactly $\frac{m}{2}$ elements.  Since there are $n$ number of paths attached to every vertex $u_i$, $1\leq i\leq n$ of the cycle $C_n$, which contributes $n\frac{m}{2}$ number of elements to the maximal independent set $I$. Therefore, $\alpha(G)=\frac{nm}{2}$.  Let $E_1=\{ e_1, e_2, ...e_n\}$ be the edge set of the cycle $C_n$ and let $E_2=\{e_i^1, e_i^2, ... e_i^m\}$ be the edge set of the path $P_m$.  Now for every $u_i$ in $C_n$, there exists a path with $m$ number of edges.  Clearly, $\frac{m}{2}$ edges contributes to a maximal matching of $P_m$.  For each $n$ vertices $u_i$, $(1\leq i \leq n)$  of $C_n$, there is a path $P_m$ adjoined to it, so that the maximal matching in $G$, $\nu(G)=\frac{nm}{2}$.
\\Therefore, $\alpha(G)+\alpha(L(G))= \frac{nm}{2}+ \frac{nm}{2} = nm$ and
$\alpha(G).\alpha(L(G))= \frac{nm}{2}. \frac{nm}{2} = \frac{n^2m^2}{4}$.
\\ 
\\Case - 2 : ($m$ is odd and $n$ is even)
\\ Since $m$ is odd, the maximal independent set of $P_m$ adjoined with the vertex $u_1$ of $C_n$ is $\frac{(m+1)}{2}$.  But the vertex $u_1$ is an element of the maximal independent set, $u_2$, the vertex adjacent to $u_1$ of $C_n$ cannot be in $I$.  So the maximal independent set from the path $P_m$ adjoined with the vertex $u_2$ consists of $\frac{(m-1)}{2}$ elements.  Proceeding like this, in all the paths of $G$, the maximal independent set corresponding to $u_i$, $1\leq i \leq n$ of the cycle $C_n$, is alternately $\frac{m+1}{2}$ and $\frac{m-1}{2}$.  Since there are $n$ vertices in $C_n$, the independence number of $G$, $\alpha(G)=\frac{n}{2} \lbrack  \frac{(m-1)}{2}+\frac{(m+1)}{2}\rbrack = \frac{nm}{2}$.   Let $E_1=\{ e_1, e_2, ...e_n\}$ be the edge set of the cycle $C_n$ and let $E_2=\{e_i^1, e_i^2, ... e_i^m\}$ be the edge set of the path $P_m$.  Now, for every $u_i$ in $C_n$, there exists a path with $m$ number of edges.  Clearly, $\frac{m}{2}$ edges contributes to a maximal matching of $P_m$.   For each $n$ vertices $u_i$, $1\leq i \leq n$  of $C_n$, there is a path $P_m$ adjoined to it, so that the maximal matching in $G$, $\nu(G)= \frac{n}{2}+n\lbrack\frac{m-1}{2}\rbrack = \frac{nm}{2}$.
\\Therefore, $\alpha(G)+\alpha(L(G))= \frac{nm}{2}+ \frac{nm}{2} = nm$ and
$\alpha(G).\alpha(L(G))= \frac{nm}{2}. \frac{nm}{2} = \frac{n^2m^2}{4}$.
\\\\
Case - 3 : ($m$ is even and $n$ is odd)
\\ Since $m$ is even,  and since $u_1^1, u_1^2,....u_1^m$ be the vertices of the first path attached to the first vertex $u_1$ of the cycle $C_n$, the maximal independent set consists of exactly $\frac{m}{2}$ elements.  Since there are $n$ number of paths attached to every vertex $u_i$, $i\leq 1\leq n$ of the cycle $C_n$, which contributes $n\frac{m}{2}$ number of elements to the maximal independent set $I$. Therefore $\alpha(G)=\frac{nm}{2}$. Let $E_1=\{ e_1, e_2, ...e_n\}$ be the edge set of the cycle $C_n$ and let $E_2=\{e_i^1, e_i^2, ... e_i^m\}$ be the edge set of the path $P_m$.  Now, for every $u_i$ in $C_n$, there exists a path with $m$ number of edges.  Clearly, $\frac{m}{2}$ edges contributes to a maximal matching of $P_m$.  For all $n$ vertices of the cycle $C_n$, there adjoined paths $P_m$, so that the maximal matching in $G$ is $\nu(G)=\frac{nm}{2}$.
\\Therefore $\alpha(G)+\alpha(L(G))= \frac{nm}{2}+ \frac{nm}{2} = nm$ and
$\alpha(G).\alpha(L(G))= \frac{nm}{2}. \frac{nm}{2} = \frac{n^2m^2}{4}$.
\\\\
Case - 4 : ($m$ is odd and $n$ is odd)
\\ Since $m$ is odd, the maximal independent set $I$ of $P_m$ adjoined with the vertex $u_1$ of $C_n$ is $\frac{(m+1)}{2}$.  But the vertex $u_1$ is an element of the maximal independent set of $P_m$, $u_2$, the vertex adjacent to $u_1$ of $C_n$ cannot be in $I$.  So the maximal independent set from the path $P_m$ adjoined with the vertex $u_2$ consists of $\frac{(m-1)}{2}$ elements.  Proceeding like this, in all the paths of $G$, the maximal independent set corresponding to $u_i$, $1\leq i \leq n$ of the cycle $C_n$, is alternately $\frac{m+1}{2}$ and $\frac{m-1}{2}$.  Obviously, there are $\lfloor\frac{n}{2}\rfloor$ number of paths which contributes $\frac{m+1}{2}$ and $\lceil\frac{n}{2}\rceil$ number of paths which contributes $\frac{m-1}{2}$ number of vertices to the maximal independend set, the independence number of $G$, $\alpha(G)= \lfloor \frac{n}{2}\rfloor \lbrack \frac{m+1}{2}\rbrack+ \lceil \frac{n}{2}\rceil \lbrack \frac{m-1}{2}\rbrack$.  Let $E_1=\{ e_1, e_2, ...e_n\}$ be the edge set of the cycle $C_n$ and let $E_2=\{e_i^1, e_i^2, ... e_i^m\}$ be the edge set of the path $P_m$.  Now, for every $u_i$ in $C_n$, there exists a path with $m$ number of edges.  Clearly, since $m$ is odd, $n \lbrack\frac{m-1}{2}\rbrack$ edges contributes to a maximal matching of $P_m$ for all paths of $G$.  Also, since $n$ is also odd, which contributes $\lfloor\frac{n}{2}\rfloor$ number of edges to the maximal matching of $G$, so that the independence number of $L(G)$ is equal to the matching number of $G$ is $\nu(G)= n \lbrack\frac{m-1}{2}\rbrack+\lfloor\frac{n}{2}\rfloor$.
\\
Therefore,\\
\begin{eqnarray*}
\alpha(G)+\alpha(L(G))&=& \lfloor\frac{n}{2}\rfloor \lbrack \frac{m+1}{2}\rbrack + \lceil \frac{n}{2}
\rceil \lbrack\frac{m-1}{2}\rbrack + \lfloor\frac{n}{2}\rfloor + n \lbrack\frac{m-1}{2}\rbrack \\
&=& \lfloor\frac{n}{2}\rfloor \lbrack \frac{m+1}{2}+1\rbrack + \lbrack\frac{m-1}{2}\rbrack \lbrack n+\lceil\frac{n}{2}\rceil\rbrack 
\end{eqnarray*} 
 and
\begin{eqnarray*}
\alpha(G).\alpha(L(G)) & = & \lbrack\lfloor\frac{n}{2}\rfloor\lbrack\frac{m+1}{2}\rbrack+\lceil\frac{n}{2}\rceil\lbrack\frac{m-1}{2}\rbrack\rbrack.\lbrack\lfloor\frac{n}{2}\rfloor + n\lbrack\frac{m-1}{2}\rbrack\rbrack
\end{eqnarray*}

\end{proof}
\section{Conclusion}{\rm
The theoretical results obtained in this research may provide a better insight into the problems involving matching number and independence number by improving the known lower and upper bounds on sums and products of independence numbers of a graph $G$ and an associated graph of $G$.  More properties and characteristics of operations on independence number and also other graph parameters are yet to be investigated.  The problems of establishing the inequalities on sums and products of independence numbers for various graphs and graph classes still remain unsettled. All these facts highlight a wide scope for further studies in this area.
\\\\This work is motivated by the inspiring talk given by Dr. J Paulraj Joseph, Department of Mathematics, Manonmaniam Sundaranar University,  TamilNadu, India titled \textbf{Bounds on sum of graph parameters - A survey}, at the National Conference on Emerging Trends in Graph Connections (NCETGC-2014), University of Kerala,  Kerala, India.}


\end{document}